\theoremstyle{plain}
\newtheorem*{acknowledgment}{Acknowledgment}
\newtheorem{algorithm}{Algorithm}[section]
\newtheorem{thm}{Thm}
\newtheorem{example}[algorithm]{Example}
\newtheorem{theorem} [algorithm] {Theorem}
\newtheorem{theoremlet}[thm]{Theorem}
\newtheorem{definitionlet}[thm]{Definition}
\newtheorem*{remarknonum}{Remark}
\newtheorem{proposition}[algorithm]{Proposition}
\newtheorem{remark}[algorithm]{Remark}
\begin{document}
\title{On Jacobi Field Splitting Theorems}
\author{Dennis Gumaer}
\address{Department of Natural Sciences and Mathematics, West Liberty
University}
\email[Dennis Gumaer]{dennis.gumaer@westliberty.edu}
\author{Frederick Wilhelm}
\address{Department of Mathematics, Univ.of Calf., Riverside}
\email[Frederick Wilhelm]{fred@math.ucr.edu }
\urladdr{http://mathdept.ucr.edu/faculty/wilhelm.html}
\date{February 24, 2014}
\subjclass{53C20}
\keywords{Jacobi Fields, Splitting Theorems}

\begin{abstract}
We formulate extensions of Wilking's Jacobi field splitting theorem to
uniformly positive sectional curvature and also to positive and nonnegative
intermediate Ricci curvatures.
\end{abstract}

\maketitle

In \cite{wilking}, Wilking established the following remarkable Jacobi field
splitting theorem.

\begin{theoremlet}
\label{Wilking split}(Wilking) Let $\gamma $ be a unit speed geodesic in a
complete Riemannian $n$--manifold $M$ with nonnegative curvature$.$ Let $%
\Lambda $ be an $(n-1)$-dimensional space of Jacobi fields orthogonal to $%
\gamma $ on which the Riccati operator $S$ is self-adjoint. Then $\Lambda $
splits orthogonally into 
\begin{equation*}
\Lambda =\mathrm{span}\{J\in \Lambda \ |\ J(t)=0\text{ for some }t\}\oplus
\{J\in \Lambda \ |\ J\text{ is parallel}\}.
\end{equation*}
\end{theoremlet}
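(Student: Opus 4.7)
Self-adjointness of $S$ on $\Lambda$ is equivalent to the vanishing of the Wronskian $\omega(J_1,J_2) := \langle J_1',J_2\rangle - \langle J_1,J_2'\rangle$ on $\Lambda\times\Lambda$; the symmetries of $R$ force $\omega'\equiv 0$ along $\gamma$, so this Lagrangian condition propagates to every $t$. Writing $P := \{J\in\Lambda : J \text{ parallel}\}$ and $Z := \mathrm{span}\{J\in\Lambda : J(t_0)=0 \text{ for some } t_0\}$, two consequences are immediate. A parallel field that vanishes at a point is zero, so $P \cap Z = 0$. For pointwise orthogonality, if $J_p\in P$ and $J_z\in\Lambda$ with $J_z(t_0)=0$, then
\[
\tfrac{d}{dt}\langle J_p,J_z\rangle = \langle J_p,J_z'\rangle = \langle J_p',J_z\rangle = 0
\]
using $J_p'\equiv 0$ and $\omega\equiv 0$, so $\langle J_p,J_z\rangle$ is a constant that vanishes at $t_0$; linearity extends this to all of $Z$.

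The substantive part is the dimension count $\dim P + \dim Z = n-1$. Let $\mathcal P(t) := \{J_p(t):J_p\in P\}$ and $\mathcal N(t) := \mathcal P(t)^\perp$ inside $\gamma'(t)^\perp$. Parallel Jacobi fields satisfy $R(J_p,\gamma')\gamma'=0$, so by the symmetries of $R$ the splitting $\mathcal P\oplus\mathcal N$ is preserved by $R$ and by parallel transport, and the Jacobi equation decouples accordingly. Setting $\Lambda_{\mathcal N} := \{J\in\Lambda : J(t)\in\mathcal N(t)\text{ for all }t\}$, the pointwise orthogonality already established forces $\Lambda_{\mathcal N} = \{J\in\Lambda : \langle J(0),J_p(0)\rangle = 0 \text{ for all } J_p\in P\}$, and a dimension count at a non-focal time yields $\Lambda = P\oplus\Lambda_{\mathcal N}$ with $Z\subset\Lambda_{\mathcal N}$. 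The theorem thus reduces to $\Lambda_{\mathcal N}=Z$.

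On $\mathcal N$ the restricted Riccati operator $S_{\mathcal N}$ is self-adjoint and obeys $S_{\mathcal N}' + S_{\mathcal N}^2 + R_{\mathcal N} = 0$. Taking traces and applying Cauchy--Schwarz, $\mathrm{tr}(S_{\mathcal N}^2)\ge(\mathrm{tr}\,S_{\mathcal N})^2/\dim\mathcal N$, together with $\mathrm{tr}\,R_{\mathcal N}\ge 0$ from nonnegative sectional curvature, I obtain $u' + u^2/\dim\mathcal N\le 0$ for $u := \mathrm{tr}\,S_{\mathcal N}$. If $u\equiv 0$ then equality forces $S_{\mathcal N}\equiv 0$, so every $J\in\Lambda_{\mathcal N}$ is parallel and hence lies in $\mathcal P\cap\mathcal N = 0$, giving $\Lambda_{\mathcal N}=0$. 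Otherwise $u$ must blow up in finite forward or backward time, producing a vanishing Jacobi field in $\Lambda_{\mathcal N}$. Peeling off that vanishing field and iterating on a complement inside a smaller parallel, curvature-invariant sub-bundle on which the same hypotheses persist then exhausts $\Lambda_{\mathcal N}$ by vanishing fields.

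I expect the main obstacle to be the inductive descent. Since $\omega\equiv 0$ on all of $\Lambda_{\mathcal N}$, symplectic orthogonality is trivial, so choosing a codimension-one complement of the vanishing field on which the self-adjoint Riccati structure and the nonnegative curvature term genuinely restrict requires some care; one must identify a sub-bundle of $\mathcal N$ of the right rank on which both $R$ and the restricted Jacobi evolution stay invariant. A continuity argument inside the finite-dimensional space $\Lambda_{\mathcal N}$ is also needed to guarantee that at the blow-up instant of $u$ the singularity of $S_{\mathcal N}$ is witnessed by an actual Jacobi field of $\Lambda_{\mathcal N}$ vanishing there.
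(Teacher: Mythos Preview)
The paper does not re-prove Theorem~A; it is quoted from \cite{wilking}, and Wilking's method---the generalized Horizontal Curvature Equation of Theorem~\ref{HCE thm}---is then used to prove the analogs (Theorems~B, C, G). So the relevant comparison is with that method as it appears in the proof of Theorem~B.

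Your approach and Wilking's run in opposite directions. You begin with the parallel subspace $P$, split $\Lambda=P\oplus\Lambda_{\mathcal N}$, and then try to show $\Lambda_{\mathcal N}\subset Z$ by producing vanishing fields one at a time via the traced Riccati inequality. Wilking instead sets $\Psi=Z$ at the outset, forms $V(t)$ and $H(t)=V(t)^\perp\cap\dot\gamma(t)^\perp$, and proves that the induced operator $\hat S$ on $H$ is globally defined and satisfies
\[
\hat S'+\hat S^2+\{R(\cdot,\dot\gamma)\dot\gamma\}^h+3AA^*=0.
\]
Since $AA^*\ge 0$ and $\sec\ge 0$, the effective curvature is nonnegative, and the trace argument of Theorem~\ref{inf splitting thm} (your step~4--5, applied once) gives $\hat S\equiv 0$ and then $A\equiv 0$. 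The vanishing of $A$ is what makes $H$ genuinely parallel, so the horizontal Jacobi fields are parallel in $M$, and the splitting follows in one pass.

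The gap in your plan is exactly the one you flag: the inductive descent. Having found a single $J_0\in\Lambda_{\mathcal N}$ with $J_0(t_0)=0$, neither $\mathrm{span}\{J_0(t)\}$ nor its pointwise orthogonal complement in $\mathcal N(t)$ is parallel or $R$-invariant in general, so you cannot restrict the Riccati equation to a sub-bundle of rank $\dim\mathcal N-1$ and repeat the argument. The $3AA^*$ term in Wilking's equation is precisely the correction that accounts for this non-invariance, and it only yields a globally defined $\hat S$ when you collect all vanishing fields into $\Psi$ at once. Trying to peel them off one by one forces you to reinvent that correction at every stage; without it the argument stalls after the first vanishing field. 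Your preliminary reduction $\Lambda=P\oplus\Lambda_{\mathcal N}$ is correct but ultimately unnecessary: in Wilking's scheme the parallel summand emerges as $H$ at the end rather than being isolated at the start.
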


This result has several impressive applications, so it is natural to ask
about analogs for other curvature conditions. We provide these analogs for
positive sectional curvature and also for nonnegative and positive
intermediate Ricci curvatures. For positive curvature our result is the
following.

\begin{theoremlet}
\label{main} Let $M$ be a complete $n$-dimensional Riemannian manifold with $%
\sec \geq 1$. For $\alpha \in \lbrack 0,\pi ),$ let $\gamma :\left[ \alpha
,\pi \right] \longrightarrow M$ be a unit speed geodesic, and let $\Lambda $
be an $(n-1)$-dimensional family of Jacobi fields orthogonal to $\gamma $ on
which the Riccati operator $S$ is self-adjoint. If 
\begin{equation*}
\max \{\text{eigenvalue }S(\alpha )\}\leq \cot \alpha ,
\end{equation*}%
then $\Lambda $ splits orthogonally into%
\begin{equation}
\mathrm{span}\{J\in \Lambda \ |\ J\left( t\right) =0\text{ for some }t\in
\left( \alpha ,\pi \right) \}\oplus \{J\in \Lambda \ |J=\sin (t)E(t)\text{
with }E\text{ parallel}\}.  \label{splitting displ}
\end{equation}
\end{theoremlet}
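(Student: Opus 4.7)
My plan is to reduce Theorem~\ref{main} to Theorem~\ref{Wilking split} by factoring out the model Jacobi field $\sin(t)$ and then reparametrizing time, thereby converting the $\sec \geq 1$ Jacobi system into an abstract Jacobi system with nonnegative curvature. Let $A(t)$ be a Jacobi tensor along $\gamma$ whose columns span $\Lambda$, so $A'' + R A = 0$ with $S := A'A^{-1}$ self-adjoint. I write $A(t) = \sin(t)\,B(t)$; using $\sin'' + \sin = 0$ and the fact that $\sec \geq 1$ implies $R \geq I$, the Jacobi equation becomes
\begin{equation*}
B'' + 2\cot(t)\,B' + (R - I)\,B = 0, \qquad R - I \geq 0,
\end{equation*}
while $S = \cot(t)\,I + T$ with $T := B'B^{-1}$. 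Self-adjointness of $T$ is equivalent to that of $S$, and the hypothesis on the largest eigenvalue of $S(\alpha)$ translates to the operator inequality $T(\alpha) \leq 0$.

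Next I reparametrize by $s = -\cot(t)$, so that $ds/dt = \csc^2(t)$ and $s$ carries $(\alpha,\pi)$ diffeomorphically onto $(-\cot\alpha, +\infty)$. Setting $\tilde B(s) := B(t(s))$, a direct computation eliminates the first-order term and yields the abstract Jacobi equation
\begin{equation*}
\frac{d^2\tilde B}{ds^2} + \tilde R\,\tilde B = 0, \qquad \tilde R(s) := \sin^4(t(s))\,(R - I) \geq 0,
\end{equation*}
with Riccati operator $\tilde T = \sin^2(t)\,T$, still self-adjoint and satisfying $\tilde T(s(\alpha)) \leq 0$. Since Wilking's argument is at bottom a statement about self-adjoint Riccati systems with nonnegative curvature, a suitable half-line version of Theorem~\ref{Wilking split} applied on $[s(\alpha), +\infty)$ with the left boundary condition $\tilde T(s(\alpha)) \leq 0$ should produce an orthogonal splitting of $\Lambda$ into $\tilde B$-fields that vanish at some $s_0 \in (s(\alpha), +\infty)$ and $\tilde B$-fields constant in $s$.

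Translating back, a constant $\tilde B$ corresponds to $B' = 0$, so $E := B$ is parallel along $\gamma$ and $J = \sin(t)\,E$ as desired; a vanishing $\tilde B(s_0) = 0$ gives $J(t_0) = 0$ at $t_0 = t(s_0) \in (\alpha, \pi)$, since $\sin > 0$ on that interval. The main obstacle I anticipate is establishing this half-line version of Theorem~\ref{Wilking split}: one must verify that the Riccati monotonicity $\tilde T' = -\tilde T^2 - \tilde R \leq 0$, combined with the boundary condition $\tilde T(s(\alpha)) \leq 0$, provides enough structure to force any vanishing to occur strictly inside $(s(\alpha), +\infty)$ --- so that no Jacobi field in the vanishing component vanishes exactly at $t = \alpha$ --- and to produce a genuine orthogonal decomposition rather than merely a filtration. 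A secondary point is checking that the ``parallel'' conclusion in the abstract setting really means $\nabla_{\gamma'} E = 0$ in $M$, which follows from the identification $E = B = A/\sin(t)$ and the fact that the $s$-derivative of $\tilde B$ vanishes iff the covariant $t$-derivative of $B$ does.
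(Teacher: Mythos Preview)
Your reduction is sound and takes a genuinely different route from the paper. The paper never transforms the equation: it sets $\Psi=\{J\in\Lambda\mid J(t)=0\text{ for some }t\}$, applies Wilking's generalized horizontal curvature equation to obtain a self-adjoint $\hat S$ on $H=V^\perp$ satisfying $\hat S'+\hat S^2+\hat R=0$ with $\hat R\ge I$ (since $\sec\ge 1$ and $AA^\ast\ge 0$), checks that $\hat S(\alpha)\le(\cot\alpha)\,I$, and then invokes a separately proven rigidity result (Theorem~\ref{pos}, an infinitesimal Cheng--type statement) to force $\hat S\equiv(\cot t)\,I$; this makes $A\equiv 0$, so $H$ is parallel and spanned by fields $\sin(t)E$. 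Your substitution $A=\sin(t)B$ together with the time change $s=-\cot t$ instead converts the problem into a nonnegative-curvature one and appeals to the already-known Theorem~\ref{Wilking split}, which makes the analogy with Wilking's original result completely transparent. The paper's route has the side benefit that Theorem~\ref{pos} drops out as an independent statement; yours is more economical once the half-line issue is settled.

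That issue is real but closes cleanly. For $\alpha=0$ your map already sends $(0,\pi)$ onto all of $\mathbb R$, so the full-line statement applies. For $\alpha>0$, the hypothesis that $S(\alpha)$ has finite eigenvalues forces $A(\alpha)$, hence $B(\alpha)$, to be invertible; extend the system to $(-\infty,s(\alpha)]$ by declaring $\tilde R\equiv 0$ there and solving $\tilde B''=0$ with matching Cauchy data at $s(\alpha)$. Since $\tilde T(s(\alpha))\le 0$, one gets $\tilde B(s)=\bigl(I+(s-s(\alpha))\tilde T(s(\alpha))\bigr)\tilde B(s(\alpha))$ for $s\le s(\alpha)$, and $(s-s(\alpha))\tilde T(s(\alpha))\ge 0$ makes this invertible; so no nonzero field vanishes on $(-\infty,s(\alpha)]$, the Riccati operator stays self-adjoint, and the full-line splitting forces every zero into $(s(\alpha),\infty)$, i.e.\ into $(\alpha,\pi)$ after pulling back. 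The only caveat is that Theorem~\ref{Wilking split} is stated for a geodesic in a manifold while you need it for an abstract self-adjoint system with a merely piecewise-smooth, nonnegative $\tilde R$; you should note explicitly that Wilking's proof (horizontal curvature equation plus the trace argument behind Theorem~\ref{inf splitting thm}) is a pure ODE argument and carries over verbatim.
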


Notice that for $\alpha =0,$ the boundary inequality, $\max \{$eigenvalue $%
S(\alpha )\}\leq \cot \alpha =\infty ,$ is always satisfied. So Bonnet's
theorem follows as a corollary.

To understand the initial value hypothesis, $\max \{$eigenvalue $S(\alpha
)\}\leq \cot \alpha ,$ we note that in the model case when $M$ is the unit $%
n $--sphere and $\Lambda =\mathrm{span}\left\{ J|J\left( 0\right) =0\right\}
,$ $S\left( t\right) =\cot \left( t\right) \cdot id.$ So Theorem \ref{main}
says that if $\max \{$eigenvalue $S(\alpha )\}$ is smaller than in the
model, then 
\begin{eqnarray*}
\Lambda &=&\mathrm{span}\{J\in \Lambda \ |\ J\left( t\right) =0\text{ for
some }t\in \left( \alpha ,\pi \right] \} \\
&=&\mathrm{span}\{J\in \Lambda \ |\ J(t)=0\text{ for some }t\in (\alpha ,\pi
)\}\oplus \{J\in \Lambda \ |J=\sin (t)E(t)\text{ with }E\text{ parallel}\}.
\end{eqnarray*}

Like Theorem \ref{Wilking split}, and in contrast to the Rauch Comparison
Theorem, Theorem \ref{main} has no hypothesis about conjugate points.
Geodesics in Complex, Quaternionic, and Octonionic projective spaces with
their canonical metrics provide explicit examples of Theorem \ref{main} in
the presence of conjugate points and also show that both summands in (\ref%
{splitting displ}) can be nontrivial.

Examples \ref{self adj ex} and \ref{boundary inequality is needed} (below)
show that Theorem \ref{main} is optimal in the sense that neither the
boundary inequality nor the hypothesis that $S|_{\Lambda }$ is self-adjoint
can be removed from the statement.

Recall (\cite{Wu}, \cite{Shen}) that a Riemannian manifold $M$ is said to
have $k^{th}$-Ricci curvature $\geq \iota $ provided that for any choice $%
\{v,w_{1},w_{2},\ldots ,w_{k}\}$ of an orthonormal $(k+1)$-frame, the sum of
sectional curvatures $\Sigma _{i=1}^{k}sec(v,w_{i})$ is $\geq \iota $. In
short hand, this is written as $Ric_{k}\ M\geq \iota $. Clearly $Ric_{k}\
M\geq \iota k$ implies $Ric_{k+1}\ M\geq \iota (k+1)$. $Ric_{1}\ M\geq \iota 
$ is the same as $sec\ M\geq \iota $, and $Ric_{n-1}\ M\geq \iota $ is the
same as $Ric\ M\geq \iota $.

When the hypothesis $\sec _{M}\geq 0$ is replaced with $Ric_{k}(M)\geq 0$
and 
\begin{equation*}
dim\{J\in \Lambda \ |\ J(t)=0\text{ for some }t\}\leq n-k-1,
\end{equation*}%
we get the following interpolation between Wilking's Jacobi field splitting
theorem and Theorem 1.7.1 in \cite{gw}.{\Huge \ }

\begin{theoremlet}
\label{Ric_k splitting}Let $\gamma $ be a unit speed geodesic in a complete
Riemannian $n$-manifold $M$ with $Ric_{k}(\dot{\gamma})\geq 0.$ Let $\Lambda 
$ be an $(n-1)$-dimensional space of Jacobi fields orthogonal to $\gamma $
on which the Riccati operator $S$ is self-adjoint. If%
\begin{equation}
dim\left\{ \mathrm{span}\{J\in \Lambda \ |\ J(t)=0\text{ for some }%
t\}\right\} \leq n-k-1,  \label{dim ineq Ineq}
\end{equation}%
then $\Lambda $ splits orthogonally into%
\begin{equation*}
\Lambda =\mathrm{span}\{J\in \Lambda \ |\ J(t)=0\text{ for some }t\}\oplus
\{J\in \Lambda \ |\ J\text{ is parallel}\}.
\end{equation*}%
In particular, if we also have $Ric_{k}(\dot{\gamma})>0,$ then 
\begin{equation*}
dim\left\{ \mathrm{span}\{J\in \Lambda \ |\ J(t)=0\text{ for some }%
t\}\right\} \geq n-k.
\end{equation*}
\end{theoremlet}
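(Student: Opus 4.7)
The plan is to follow Wilking's proof of Theorem \ref{Wilking split}, replacing his pointwise use of $\sec \geq 0$ by a trace-level Riccati inequality driven by $Ric_{k}(\dot{\gamma}) \geq 0$. I would first set
\[
\Lambda_{0} := \mathrm{span}\{J \in \Lambda \ |\ J(t) = 0 \text{ for some } t\}
\]
and $m := (n-1) - \dim \Lambda_{0}$, so that hypothesis (\ref{dim ineq Ineq}) reads $m \geq k$. Exactly as in Wilking's argument, self-adjointness of $S$ on $\Lambda$ lets one build a complement $\Lambda_{1} \subset \Lambda$ of dimension $m$ whose pointwise evaluation $\Lambda_{1}(t) := \{J(t) : J \in \Lambda_{1}\}$ is $S(t)$-invariant inside $\dot{\gamma}(t)^{\perp}$ at every $t$. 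Because $\Lambda_{0} \cap \Lambda_{1} = 0$, and because every Jacobi field in $\Lambda$ that vanishes somewhere is automatically an element of $\Lambda_{0}$, no nonzero element of $\Lambda_{1}$ vanishes; hence $S_{1}(t) := S(t)|_{\Lambda_{1}(t)}$ is smoothly defined and self-adjoint on $\Lambda_{1}(t)$ for every $t \in \R$.

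Next I would exploit the Riccati equation on $\Lambda_{1}(t)$, namely $S_{1}' + S_{1}^{2} + R|_{\Lambda_{1}(t)} = 0$, where $R(t)$ denotes the tidal operator $v \mapsto R(v,\dot{\gamma})\dot{\gamma}$. Setting $\sigma(t) := \mathrm{tr}\, S_{1}(t)$ and tracing gives
\[
\sigma'(t) = -\mathrm{tr}\, S_{1}(t)^{2} - \mathrm{tr}(R|_{\Lambda_{1}(t)}).
\]
Self-adjointness of $S_{1}$ together with Cauchy--Schwarz yields $\mathrm{tr}\, S_{1}^{2} \geq \sigma^{2}/m$. A short combinatorial averaging over $k$-element subsets of an orthonormal basis shows that $Ric_{k}(\dot{\gamma}) \geq 0$ implies $\mathrm{tr}(R|_{V}) \geq 0$ for every subspace $V \subseteq \dot{\gamma}(t)^{\perp}$ of dimension at least $k$; applied to $V = \Lambda_{1}(t)$, this gives $\mathrm{tr}(R|_{\Lambda_{1}(t)}) \geq 0$. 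Combining, I obtain the global Riccati inequality $\sigma'(t) \leq -\sigma(t)^{2}/m$ valid for all $t \in \R$.

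The final step is to rule out $\sigma \not\equiv 0$ by comparison with the scalar Riccati ODE $u' = -u^{2}/m$: any $t_{0}$ with $\sigma(t_{0}) > 0$ forces the comparison solution to blow up to $+\infty$ in finite time \emph{backward}, and any $t_{0}$ with $\sigma(t_{0}) < 0$ forces finite-time \emph{forward} blow-up to $-\infty$. Either outcome contradicts the global existence of $S_{1}$, so $\sigma \equiv 0$. Feeding this back, the nonnegative summands $\mathrm{tr}\, S_{1}^{2}$ and $\mathrm{tr}(R|_{\Lambda_{1}(t)})$ must both vanish identically, and self-adjointness of $S_{1}$ combined with $\mathrm{tr}\, S_{1}^{2} \equiv 0$ forces $S_{1} \equiv 0$. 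Hence every $J \in \Lambda_{1}$ is parallel, yielding the splitting $\Lambda = \Lambda_{0} \oplus \Lambda_{1}$ with $\Lambda_{1}$ consisting of parallel Jacobi fields. For the in-particular assertion, strict positivity $Ric_{k}(\dot{\gamma}) > 0$ makes the averaging inequality strict whenever $m \geq k$, which contradicts $\mathrm{tr}(R|_{\Lambda_{1}(t)}) \equiv 0$; this forces $m < k$, i.e., $\dim \Lambda_{0} \geq n - k$.

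The hard part, I expect, will be the construction in the first paragraph: producing a linear complement of $\Lambda_{0}$ in $\Lambda$ whose pointwise image is $S(t)$-invariant, so that $S_{1}$ is a globally defined self-adjoint endomorphism of the moving family $\Lambda_{1}(t)$. This is Wilking's central technical move, and since it depends only on the self-adjointness of $S$ on $\Lambda$ (and not on any curvature sign), it should carry over to the $Ric_{k}$ setting essentially verbatim; carefully verifying the transfer, especially at instants where individual fields of $\Lambda_{0}$ vanish, is where the bulk of the technical bookkeeping will concentrate.
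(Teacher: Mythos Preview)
Your overall plan coincides with the paper's: pass to the orthogonal complement of the vanishing fields via Wilking's construction, run a trace-level Riccati comparison using $Ric_k(\dot\gamma)\geq 0$ on a subspace of dimension $\geq k$, and deduce parallelism from the absence of blow-up. The gap is in your first paragraph, where you mischaracterize Wilking's technical move.

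You assert that one can choose a linear complement $\Lambda_1\subset\Lambda$ whose pointwise image $\Lambda_1(t)$ is $S(t)$-invariant, and that $S_1:=S|_{\Lambda_1(t)}$ then satisfies the naive restricted Riccati equation $S_1'+S_1^2+R|_{\Lambda_1(t)}=0$. Neither claim is what Wilking proves, and the first cannot be arranged a priori: for $J\in\Lambda_1$ one has $J'(t)=S(J(t))$, but there is no reason this should lie in $\Lambda_1(t)=\{K(t):K\in\Lambda_1\}$. What Wilking actually supplies (Theorem~\ref{HCE thm} here) is the orthogonal distribution $H(t)=V(t)^\perp$, a \emph{new} self-adjoint operator $\hat S$ on $H(t)$ given by $\hat S(y)=((J^h)')^h$, and the \emph{modified} Riccati equation
\[
\hat S'+\hat S^{2}+\{R(\cdot,\dot\gamma)\dot\gamma\}^{h}+3AA^{\ast}=0,
\]
with an explicit nonnegative correction $3AA^{\ast}$. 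Because $AA^{\ast}\geq 0$, your trace inequality $\sigma'\leq -\sigma^{2}/m$ still holds for $\sigma=\mathrm{tr}\,\hat S$, and your blow-up comparison and $k$-subset averaging go through unchanged. But there is then one additional step you omit: the equality case also forces $\mathrm{tr}(AA^{\ast})\equiv 0$, hence $A\equiv 0$, and only this shows that $H(t)$ is a parallel distribution along $\gamma$ and is therefore spanned by parallel Jacobi fields in $\Lambda$. Replace your $S$-invariance claim by Wilking's horizontal curvature equation and append the $A\equiv 0$ step, and your argument becomes exactly the paper's.
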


For sectional curvature, the Hypothesis (\ref{dim ineq Ineq}) is 
\begin{equation*}
dim\left\{ \mathrm{span}\{J\in \Lambda \ |\ J(t)=0\text{ for some }%
t\}\right\} \leq n-2.
\end{equation*}%
If this is not satisfied, we have $dim\left\{ \mathrm{span}\{J\in \Lambda \
|\ J(t)=0\text{ for some }t\}\right\} =n-1,$ or 
\begin{equation*}
\Lambda =\mathrm{span}\{J\in \Lambda \ |\ J(t)=0\text{ for some }t\}.
\end{equation*}%
Thus Theorem \ref{Ric_k splitting} extends Theorem \ref{Wilking split} to
the $Ric_{k}$ case.

In the Ricci curvature case, Hypothesis (\ref{dim ineq Ineq}) says that all
nonzero Jacobi fields along $\gamma $ are nowhere vanishing, so for Ricci
curvature, Theorem \ref{Ric_k splitting} becomes the following result from 
\cite{gw}.

\begin{theoremlet}
\label{inf splitting thm}(Theorem 1.7.1 in \cite{gw}) Let $\gamma $ be a
unit speed geodesic in a complete Riemannian $n$--manifold $M$ with
nonnegative Ricci curvature$.$ Let $\Lambda $ be an $(n-1)$-dimensional
space of Jacobi fields orthogonal to $\gamma $ on which the Riccati operator 
$S$ is self-adjoint. Let $L\left( t\right) \equiv \{J(t)|J\in \Lambda \}.$
If $L\left( t\right) $ spans $\dot{\gamma}(t)^{\perp }$ for all $t\in 
\mathbb{R}$, then $S\equiv 0$, $\sec (\dot{\gamma},\cdot )\equiv 0$, and $%
\Lambda $ consists of parallel Jacobi fields.
\end{theoremlet}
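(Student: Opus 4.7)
The plan is to reduce everything to a single scalar Riccati inequality for the trace of $S$, and then exploit the fact that the operator is forced to be globally defined on $\mathbb{R}$.

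First, since the hypothesis guarantees that $L(t)=\dot{\gamma}(t)^{\perp}$ for every $t\in\mathbb{R}$, no nonzero $J\in\Lambda$ can vanish anywhere (if $J(t_0)=0$, dimension-counting would force $L(t_0)$ to be a proper subspace). Consequently the Riccati operator $S(t):\dot{\gamma}(t)^{\perp}\to \dot{\gamma}(t)^{\perp}$, defined by $S(t)J(t)=J'(t)$ for $J\in\Lambda$, is well-defined, smooth, and self-adjoint for all $t\in\mathbb{R}$, and it satisfies the Riccati equation
\begin{equation*}
S'+S^{2}+R=0,
\end{equation*}
where $R(\cdot)=R(\cdot,\dot{\gamma})\dot{\gamma}$.

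Next I would take the trace and set $u(t)=\operatorname{tr}S(t)$. Since $S$ is self-adjoint, the Cauchy--Schwarz inequality in the Frobenius inner product gives $\operatorname{tr}(S^{2})\geq \tfrac{1}{n-1}u^{2}$, and by hypothesis $\operatorname{tr}R=\mathrm{Ric}(\dot{\gamma})\geq 0$. Therefore
\begin{equation*}
u'=-\operatorname{tr}(S^{2})-\mathrm{Ric}(\dot{\gamma})\leq -\frac{u^{2}}{n-1}.
\end{equation*}
This is the standard scalar Riccati inequality. Its positive solutions blow up in finite forward time and its negative solutions blow up in finite backward time; explicitly, if $u(t_{0})>0$ then $u(t)\to+\infty$ as $t\nearrow t_{0}+\frac{n-1}{u(t_{0})}$, and symmetrically in the other direction. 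Since $u$ is smooth on all of $\mathbb{R}$, the only possibility is $u\equiv 0$.

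With $u\equiv 0$, both nonnegative quantities in the identity $0=u'=-\operatorname{tr}(S^{2})-\mathrm{Ric}(\dot{\gamma})$ must vanish, so $\operatorname{tr}(S^{2})\equiv 0$ and hence (by self-adjointness) $S\equiv 0$. Substituting back into the Riccati equation gives $R\equiv 0$ along $\dot{\gamma}^{\perp}$, i.e.\ $\sec(\dot{\gamma},\cdot)\equiv 0$. Finally, for any $J\in\Lambda$ we have $J'=S(J)=0$, so $J$ is parallel.

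The one step that requires some care, and is the main conceptual point rather than a routine manipulation, is the blow-up argument in the third paragraph: the conclusion rests entirely on the fact that the domain of $S$ is all of $\mathbb{R}$, which in turn came from the spanning hypothesis $L(t)=\dot{\gamma}(t)^{\perp}$. Everything else is a direct algebraic consequence of the Riccati equation, self-adjointness, and nonnegativity of $\mathrm{Ric}(\dot{\gamma})$.
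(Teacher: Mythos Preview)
Your argument is correct in substance and matches the standard proof; the paper itself cites this result from \cite{gw} without reproving it, but your trace-Riccati approach is exactly the template the paper uses for the positive-curvature analog, Theorem~\ref{pos}.

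One small correction: your blow-up directions are reversed. Since $u'=-\operatorname{tr}(S^{2})-\mathrm{Ric}(\dot\gamma)\leq 0$, the function $u$ is non-increasing on all of $\mathbb{R}$; hence if $u(t_{0})>0$ then $u(t)\geq u(t_{0})$ for all $t<t_{0}$, and integrating $(1/u)'\geq \tfrac{1}{n-1}$ backward gives $u(t)\to+\infty$ as $t\searrow t_{0}-\tfrac{n-1}{u(t_{0})}$ (backward blow-up), while $u(t_{0})<0$ forces forward blow-up to $-\infty$. Either way $u$ cannot be defined on all of $\mathbb{R}$ unless $u\equiv 0$, so your conclusion stands unchanged.
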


Just as Wilking applied Theorem \ref{Wilking split} to obtain a structure
result for metric foliations in positive curvature, we apply Theorem \ref%
{Ric_k splitting} to obtain a structure result for metric foliations of
manifolds with positive $Ric_{k}.$ First we recall the definitions of metric
foliations and their dual leaves from \cite{gw} and \cite{wilking}.

\begin{definitionlet}
A metric foliation $\mathcal{F}$ of a Riemannian manifold $M$ is a partition
of $M$ into connected subsets, called leaves, that are locally equidistant
in the following sense. For all $p\in M,$ there are neighborhoods $U\subset
V $ of $p$ so that for any two leaves $L_{1}$ and $L_{2}$ and connected
components $N_{1}$ and $N_{2}$ of $L_{1}\cap V$ and $L_{2}\cap V$,
respectively, the function 
\begin{eqnarray*}
\mathrm{dist}_{N_{1}} &\mathrm{:}&N_{2}\cap U\longrightarrow \mathbb{R}, \\
\mathrm{dist}_{N_{1}}\left( q\right) &\equiv &\mathrm{dist}\left(
q,N_{1}\right)
\end{eqnarray*}%
is constant.
\end{definitionlet}

Examples include the fiber decomposition of a Riemannian submersion and the
orbit decomposition of an isometric group action.

A unit speed geodesic $\gamma :\left( 0,\infty \right) \longrightarrow M$ is
called horizontal for $\mathcal{F}$ if and only if for all $t_{0}>0,$ there
is an $\varepsilon >0$ so that 
\begin{equation*}
\mathrm{dist}\left( L\left( \gamma \left( t_{0}\right) \right) ,\gamma
\left( t\right) \right) =\left\vert t-t_{0}\right\vert
\end{equation*}%
for all $t\in \left( t_{0}-\varepsilon ,t_{0}+\varepsilon \right) .$ Here $%
L\left( \gamma \left( t_{0}\right) \right) $ is the leaf containing $\gamma
\left( t_{0}\right) .$

The \emph{Dual Leaf through }$p$ is defined to be 
\begin{equation*}
\mathfrak{L}^{\#}\left( p\right) \equiv \left\{ q\in M|\text{ there is a
piece-wise smooth horizontal curve from }p\text{ to }q\right\} .
\end{equation*}

\begin{theoremlet}
\label{Ricci_k Submersion}Let $\mathcal{F}$ be a Riemannian foliation of a
complete Riemannian $n$--manifold $E$ with $Ric_{k}(E)>0.$ Then the
dimension of the leaves of $\mathcal{F}^{\#}$ are all $\geq n-k+1.$
\end{theoremlet}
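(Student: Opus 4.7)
The plan is to adapt Wilking's argument from \cite{wilking}, with Theorem \ref{Ric_k splitting} replacing the sectional-curvature splitting theorem \ref{Wilking split}.

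Fix $p\in E$ and a unit horizontal vector $v\in T_{p}E$, and let $\gamma$ be the horizontal geodesic with $\dot{\gamma}(0)=v$. First I would construct the standard $(n-1)$-dimensional space $\Lambda$ of holonomy Jacobi fields along $\gamma$ orthogonal to $\dot{\gamma}$: for each $w\in v^{\perp}$, decompose $w=w^{h}+w^{v}$, take a short curve $c$ in the leaf $\mathfrak{L}(p)$ with $\dot{c}(0)=w^{v}$, horizontally transport $v$ along $c$, and consider the variation of $\gamma$ by the resulting horizontal geodesics, perturbing the initial velocity further by $w^{h}$. The variation field $J_{w}$ is a Jacobi field, and $\Lambda:=\{J_{w}\mid w\in v^{\perp}\}$ is $(n-1)$-dimensional; the map $J\mapsto J(0)$ is a linear isomorphism $\Lambda\to v^{\perp}$, and $S$ is self-adjoint on $\Lambda$ because $\mathcal{F}$ is a metric foliation.

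Next, applying Theorem \ref{Ric_k splitting} to $\Lambda$ along $\gamma$, together with the hypothesis $Ric_{k}(E)>0$ (so in particular $Ric_{k}(\dot{\gamma})>0$), yields
\[
\dim\Lambda_{0}\geq n-k,\qquad \Lambda_{0}:=\mathrm{span}\{J\in\Lambda\mid J(t)=0\text{ for some }t\}.
\]
The geometric translation, following Wilking \cite{wilking}, is that whenever $J=J_{w}\in\Lambda_{0}$ vanishes at some $t_{0}$, the vector $w=J(0)$ lies in $T_{p}\mathfrak{L}^{\#}(p)$: for the horizontal part $w^{h}$ this is immediate from $\gamma_{s}\subset\mathfrak{L}^{\#}(p)$, while for the vertical part $w^{v}$ one concatenates $\gamma$, a short leaf arc, and the reverse of $\gamma_{s}$ to obtain a piecewise horizontal loop whose endpoint moves in direction $w^{v}$ to second order. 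Since $J\mapsto J(0)$ is injective on $\Lambda$, the image $\{J(0)\mid J\in\Lambda_{0}\}\subset v^{\perp}$ has dimension $\geq n-k$, and adjoining $v\in T_{p}\mathfrak{L}^{\#}(p)$ gives
\[
\dim\mathfrak{L}^{\#}(p)\geq (n-k)+1=n-k+1.
\]

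The main obstacle is the geometric translation step: checking that vanishing holonomy Jacobi fields really do produce directions tangent to the dual leaf, in particular the vertical ones obtained only at second order via piecewise paths. The construction of $\Lambda$, the self-adjointness of $S$, and the invocation of Theorem \ref{Ric_k splitting} are all routine; the substantive geometric content is packaged in Wilking's dual-leaf argument, which must be shown to pass through under the weaker conclusion $\dim\Lambda_{0}\geq n-k$ (rather than $\dim\Lambda_{0}=n-1$) supplied by the $Ric_{k}$ hypothesis.
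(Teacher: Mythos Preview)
Your overall strategy matches the paper's proof exactly: build the $(n-1)$--dimensional Lagrangian family $\Lambda$ of Jacobi fields coming from horizontal--geodesic variations based on the leaf through $p$, apply Theorem~\ref{Ric_k splitting} to get $\dim\Lambda_0\geq n-k$, invoke Wilking's observation that $\Lambda_0$ is tangent to the dual leaf, and then adjoin $\dot\gamma$ to obtain $\dim\mathfrak{L}^{\#}(p)\geq n-k+1$. The paper carries out precisely these four steps, citing \cite{wilking} (top of page 1305) for the third.

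There is, however, a genuine error in your implementation of the third step. For the family $\Lambda$ you (and the paper) construct, the evaluation map $J\mapsto J(0)$ is \emph{not} an isomorphism onto $v^{\perp}$. Every variation $\gamma_s$ in your construction has $\gamma_s(0)=c(s)$ lying in the leaf $\mathfrak L(p)$, so $J(0)=\dot c(0)=w^{v}$ is always \emph{vertical}; the horizontal perturbation $w^{h}$ of the initial velocity contributes to $J'(0)$, not to $J(0)$. Thus $J\mapsto J(0)$ has image in the vertical space and has nontrivial kernel whenever the leaves have dimension less than $n-1$. Consequently your inequality $\dim\{J(0):J\in\Lambda_0\}\geq n-k$ is not justified, and the ``second order'' loop argument for vertical directions does not repair this.

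The fix is the one the paper uses implicitly: Wilking's statement is that $\{J(t):J\in\Lambda_0\}$ is tangent to $\mathfrak L^{\#}(p)$ for \emph{every} $t$, not just $t=0$. The point is that if $J\in\Lambda$ vanishes at some $t_0$, it is also the variation field of the family $s\mapsto\exp_{\gamma(t_0)}\bigl((t-t_0)(\dot\gamma(t_0)+sJ'(t_0))\bigr)$ of horizontal geodesics all issuing from the single point $\gamma(t_0)$, hence lying entirely in $\mathfrak L^{\#}(p)$. One then evaluates at a generic $t$ where $J\mapsto J(t)$ is injective on $\Lambda_0$ (all but finitely many $t$), obtaining $n-k$ independent directions in $T_{\gamma(t)}\mathfrak L^{\#}(p)$ orthogonal to $\dot\gamma(t)$, and the conclusion follows.
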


Sectional curvature is the case when $k=1.$ So our conclusion is that the
dimension of the leaves of $\mathcal{F}^{\#}$ are all $\geq n.$ In other
words, there is only one leaf as shown by Wilking in Theorem 1 of \cite%
{wilking}.

For the Ricci curvature case, our conclusion is that the dimension of the
leaves of $\mathcal{F}^{\#}$ are all $\geq 2.$ For an alternative proof of
this fact, combine Theorem \ref{inf splitting thm} with the argument on the
top of page $1305$ of \cite{wilking}.

\begin{remarknonum}
Let $\mathcal{F}$ be a Riemannian foliation of a complete nonnegatively
curved manifold. Wilking showed that the dual foliation of $\mathcal{F}$ is
also Riemannian if its leaves are complete. One might speculate that the
same holds for Riemannian foliations of manifolds with $Ric_{k}\geq 0$ if
the leaves of $\mathcal{F}^{\#}$ are complete and their dimensions are all $%
\leq n-k.$ Wilking's proof uses the Rauch Comparison Theorem in a crucial
way and hence is not directly applicable to the $Ric_{k}$ case.
\end{remarknonum}

For uniformly positive intermediate Ricci curvature we will prove the
following analog of Theorem \ref{main}.

\begin{theoremlet}
\label{Ric_k > k thm} Let $M$ be an $n$-dimensional, complete Riemannian
manifold with $Ric_{k}\geq k$. For $\alpha \in \lbrack 0,\pi ),$ let $\gamma
:\left[ \alpha ,\pi \right] \longrightarrow M$ be a unit speed geodesic. Let 
$\Lambda $ be an $(n-1)$-dimensional family of Jacobi fields on which the
Riccati operator $S$ is self-adjoint. If 
\begin{equation*}
\max \{\text{eigenvalue }S(\alpha )\}\leq \cot \alpha
\end{equation*}%
and if 
\begin{equation*}
dim\{J\in \Lambda \ |\ J(t)=0\text{ for some }t\}\leq n-k-1,
\end{equation*}%
then $\Lambda $ splits orthogonally into 
\begin{equation*}
\mathrm{span}\{J\in \Lambda \ |\ J(t)=0\text{ for some }t\in (\alpha ,\pi
)\}\oplus \{J\in \Lambda \ |\ J=\sin (t)E(t)\text{ with }E\text{ parallel}\}.
\end{equation*}
\end{theoremlet}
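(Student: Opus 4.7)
The plan is to reduce Theorem~\ref{Ric_k > k thm} to Theorem~\ref{Ric_k splitting} by a change of variables that converts the uniform intermediate Ricci bound $Ric_{k}\ge k$ into the nonnegative bound $Ric_{k}\ge 0$, while absorbing the initial-value hypothesis into a backward flat Riccati extension. First I would set $\tilde J(t):=J(t)/\sin t$ and reparametrize by $\tau:=-\cot t$, so that $t\in(\alpha,\pi)$ maps bijectively onto $\tau\in(-\cot\alpha,\infty)$. A direct calculation shows that $\tilde J$ satisfies the Jacobi equation
\[
\frac{d^{2}\tilde J}{d\tau^{2}}+\hat R(\tau)\,\tilde J=0, \qquad \hat R(\tau):=\sin^{4}(t)\,(R-I),
\]
with Riccati operator $\hat S:=\sin^{2}(t)\,(S-\cot(t)\,I)$, which inherits self-adjointness from $S$ and satisfies $d\hat S/d\tau+\hat S^{2}+\hat R=0$. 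For any $k$-dimensional $V\subset\dot\gamma^{\perp}$ one has $\mathrm{tr}(\hat R|_{V})=\sin^{4}(t)(\mathrm{tr}(R|_{V})-k)\ge 0$, so the transformed system satisfies $Ric_{k}(\hat R)\ge 0$.

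Next I would extend the system to all $\tau\in\mathbb{R}$ so that Theorem~\ref{Ric_k splitting} can be invoked. On $\tau\le -\cot\alpha$ I set $\hat R\equiv 0$ and evolve $\hat S$ by the flat Riccati $\hat S'+\hat S^{2}=0$. The initial-value hypothesis $\max\{\text{eigenvalue }S(\alpha)\}\le\cot\alpha$ is equivalent to $\hat S(-\cot\alpha)\le 0$, and the explicit backward solution
\[
\hat S(\tau)=\hat S(-\cot\alpha)\bigl[I-\hat S(-\cot\alpha)(\tau+\cot\alpha)\bigr]^{-1}
\]
starting from a non-positive self-adjoint operator remains non-positive and bounded, tending to zero as $\tau\to-\infty$; no blow-up occurs and self-adjointness persists. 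Since $\sin t\ne 0$ on $(\alpha,\pi)$, zeros of $\tilde J$ on $(-\cot\alpha,\infty)$ correspond bijectively to zeros of $J$ on $(\alpha,\pi)$; moreover any $J\in\Lambda$ is nonzero at $\alpha$, because otherwise $S(\alpha)$ would not be defined on $\dot\gamma^{\perp}(\alpha)$, so the backward extension introduces no additional zeros. The dimension hypothesis transfers unchanged to the extended $\tilde J$-system.

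With these preparations the extended system is an $(n-1)$-dimensional family of Jacobi fields on $\mathbb{R}$ with self-adjoint Riccati operator, $Ric_{k}\ge 0$, and the required dimension bound, so Theorem~\ref{Ric_k splitting} yields an orthogonal splitting of $\Lambda$ into its $\tau$-vanishing and $\tau$-parallel subspaces. Since $dt/d\tau=\sin^{2}t>0$, the condition $d\tilde J/d\tau=0$ together with $\hat R\,\tilde J=0$ is equivalent to $d\tilde J/dt=0$ and $(R-I)\tilde J=0$; writing $E:=\tilde J$ then gives $J=\sin(t)\,E(t)$ with $E$ parallel along $\gamma$ and $RE=E$, exactly the form of the second summand. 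The $\tau$-vanishing summand matches $\mathrm{span}\{J\in\Lambda:J(t)=0\text{ for some }t\in(\alpha,\pi)\}$, completing the identification with the theorem's conclusion.

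The hard part will be justifying the extension step: Theorem~\ref{Ric_k splitting} is stated for geodesics on complete Riemannian manifolds, while after the backward flat extension we have an abstract self-adjoint Riccati system on $\mathbb{R}$, not literally arising from a geodesic. One must verify that the proof of Theorem~\ref{Ric_k splitting} depends only on self-adjointness of the Riccati operator, the intermediate Ricci trace bound, and the dimension hypothesis, so that it transfers to this abstract setting. If completeness is used more essentially, a fallback is to adapt the trace-Riccati comparison argument directly in the $\tau$-variable, treating the inequality $\hat S(-\cot\alpha)\le 0$ at the finite left endpoint as the replacement boundary data.
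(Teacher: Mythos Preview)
Your approach is correct but genuinely different from the paper's. The paper does not change variables; it applies Wilking's horizontal curvature equation (Theorem~\ref{HCE thm}) directly on $\gamma$ with $\Psi=\{J\in\Lambda:J(t)=0\text{ for some }t\}$, obtains a reduced Riccati operator $\hat S$ on the horizontal space $H$ of dimension $\ge k$, verifies $\hat S(\alpha)\le\cot\alpha$ exactly as in the proof of Theorem~\ref{main}, and then runs the proof of Theorem~\ref{pos} (in place of Theorem~\ref{inf splitting thm}) on this reduced system, using that $Ric_k\ge k$ makes the normalized trace of $\{R\}^h+3AA^*$ at least $1$. Your substitution $\tilde J=J/\sin t$, $\tau=-\cot t$ instead converts the bound $Ric_k\ge k$ into $Ric_k\ge 0$ and reduces to Theorem~\ref{Ric_k splitting}; this is conceptually clean and your computation of $\hat R=\sin^4 t\,(R-I)$ and $\hat S=\sin^2 t\,(S-\cot t\cdot I)$ is right. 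The cost, which you correctly flag, is that the extended system is an abstract Jacobi/Riccati system rather than a geodesic in a complete manifold, so you must invoke the \emph{proof} of Theorem~\ref{Ric_k splitting} (Wilking's HCE plus the trace argument of Theorem~\ref{inf splitting thm}) rather than its statement, checking that these use only symmetry of the curvature operator and the self-adjointness of $S$---which they do. One small correction: your backward flat Riccati formula has a sign slip; the solution of $s'+s^{2}=0$ with $s(\tau_0)=-\lambda\le 0$ is $s(\tau)=-\lambda/(1-\lambda(\tau-\tau_0))$, which is indeed nonpositive and bounded for $\tau<\tau_0$, so your conclusion there is unaffected.
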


In contrast to Ricci curvature, the $Ric_{k}$ condition has the disadvantage
that it is not given by a tensor. On the other hand, constructing examples
with positive $Ric_{k}$ with existing techniques seems much easier than
constructing examples with positive sectional curvature. For example, many
more normal homogeneous spaces have $Ric_{k}>0$ than have positive sectional
curvature.

The proof of Theorem \ref{main} begins with the following extension of \ref%
{inf splitting thm} to positive Ricci curvature.

\begin{theoremlet}
\label{pos}Let $\gamma $ be a unit speed geodesic in a complete Riemannian $%
n $--manifold $M$ with $Ric\geq n-1.$ For $\alpha \in \left[ 0,\pi \right) ,$
let $\Lambda $ be an $(n-1)$-dimensional family of Jacobi fields orthogonal
to a unit speed geodesic $\gamma :\left[ \alpha ,\pi \right] \rightarrow M$
on which the associated Riccati operator $S$ is self-adjoint. Let $L\equiv
\{J(t)|J\in \Lambda \}$ and assume that $L$ spans $\dot{\gamma}(t)^{\perp }$
for $t\in (\alpha ,\pi )$. If 
\begin{equation*}
\max \{\text{eigenvalue }S\left( \alpha \right) \}\leq \cot \alpha ,
\end{equation*}%
then $S\equiv \cot (t)\cdot id$ and consequently $\sec (\dot{\gamma},\cdot
)\equiv 1$.
\end{theoremlet}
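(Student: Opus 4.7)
The plan is a trace-based Riccati comparison in which the spanning hypothesis on $L$ is used to upgrade ordinary comparison to rigidity. First I would set up: from the Riccati equation $S' + S^2 + R = 0$ on $\dot{\gamma}^{\perp}$, taking trace yields $(\mathrm{tr}\,S)' + \mathrm{tr}(S^2) + \mathrm{Ric}(\dot{\gamma}) = 0$. Since $S$ is self-adjoint, Cauchy--Schwarz gives $\mathrm{tr}(S^2) \geq (\mathrm{tr}\,S)^2/(n-1)$, with equality precisely when $S$ is a scalar multiple of $\mathrm{id}$. Setting $\sigma(t) = \mathrm{tr}\,S(t)/(n-1)$ and using $\mathrm{Ric} \geq n-1$, this will give $\sigma' \leq -(\sigma^2 + 1)$.

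Next, the hypothesis that every eigenvalue of $S(\alpha)$ is at most $\cot\alpha$ yields $\sigma(\alpha) \leq \cot\alpha$. The model scalar $\cot t$ satisfies $(\cot t)' = -(\cot^2 t + 1)$ with equality, so classical Riccati comparison will give $\sigma(t) \leq \cot t$ on $[\alpha,\pi)$.

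The heart of the proof is then the rigidity step: upgrading this inequality to equality. I would suppose for contradiction that $\sigma(t_0) < \cot t_0$ for some $t_0 \in [\alpha,\pi)$. Solving $g' = -(g^2+1)$ with $g(t_0) = \sigma(t_0)$ gives $g(t) = \cot(t+c)$ for some $c > 0$, which blows down to $-\infty$ at $t = \pi - c < \pi$. Applying Gronwall to $h = \sigma - g$ yields $\sigma \leq g$, so $\sigma(t) \to -\infty$ as $t \to (\pi-c)^{-}$. However, the hypothesis that $L(t)$ spans $\dot{\gamma}(t)^{\perp}$ for every $t \in (\alpha,\pi)$ means that a Jacobi matrix $A(t)$ representing $\Lambda$ is invertible on that open interval; hence $S = A'A^{-1}$ and $\sigma$ remain bounded on every compact subinterval of $(\alpha,\pi)$, contradicting the predicted blow-up. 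Therefore $\sigma \equiv \cot t$.

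Finally, $\sigma \equiv \cot t$ forces equality in both the Cauchy--Schwarz estimate and in the Ricci lower bound at every $t$, so $S(t) \equiv \cot(t)\cdot\mathrm{id}$ and $\mathrm{Ric}(\dot{\gamma}(t)) \equiv n-1$. Substituting $S \equiv \cot(t)\cdot\mathrm{id}$ back into the Riccati equation gives $R = -S' - S^2 = (\csc^2 t - \cot^2 t)\cdot\mathrm{id} = \mathrm{id}$, and hence $\sec(\dot{\gamma},\cdot) \equiv 1$. The main obstacle will be the rigidity step: translating the open-interval spanning hypothesis into a uniform interior bound on $S$ that rules out the premature Riccati blow-up forced by any strict defect in the comparison --- this is what separates the theorem from the usual Myers-type Riccati estimate.
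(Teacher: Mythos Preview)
Your proposal is correct and takes essentially the same approach as the paper: both trace the Riccati equation, use Cauchy--Schwarz (equivalently, the trace-free decomposition $S_0 = S - s\cdot\mathrm{id}$) to obtain the scalar inequality $s' + s^2 + 1 \leq 0$, compare $s$ with $\cot$, and use the spanning hypothesis on $(\alpha,\pi)$ to convert any strict defect into a forbidden interior blow-up of $s$, after which equality forces $S_0\equiv 0$ and $R\equiv\mathrm{id}$. The paper's organization differs only cosmetically---it packages the comparison as a two-sided lemma and treats the cases $s<\cot$ and $s>\cot$ separately, whereas you first establish $\sigma\leq\cot$ globally from the initial condition and then run a single forward rigidity step---but the substance is the same.
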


Notice that for $\alpha =0,$ the boundary inequality, $\max \{$eigenvalue $%
S(\alpha )\}\leq \cot \alpha =\infty ,$ is always satisfied. So Myer's
Theorem follows as a corollary. Just as we can view Theorem \ref{inf
splitting thm} as an infinitesimal version of the Splitting Theorem \cite%
{CheegGrom}, Theorem \ref{pos} can be viewed as an infinitesimal version of
Cheng's maximal diameter theorem \cite{cheng}.

Section 1 begins with the proofs of Theorems \ref{main} and \ref{pos} and
concludes with examples that show that Theorem \ref{main} is optimal in the
sense that neither the boundary inequality nor the hypothesis that $%
S|_{\Lambda }$ is self-adjoint can be removed from the statement. The proofs
of Theorems \ref{Ric_k splitting}, \ref{Ricci_k Submersion}, and \ref{Ric_k
> k thm} are given in Section 2.

\begin{remarknonum}
See \cite{VerdZil} for other interesting extensions of Wilking's Jacobi
field results.

It seems that Theorem \ref{Ric_k splitting} could be derived from Theorem B
of \cite{VerdZil}, but since our proof of Theorem \ref{Ric_k splitting} is
so simple, we have not studied its relationship to Theorem B of \cite%
{VerdZil} in detail. It might also be possible to prove our Theorem \ref%
{main} using the techniques of \cite{VerdZil}.
\end{remarknonum}

\begin{acknowledgment}
We are grateful to the referee for a thoughtful critique of the manuscript.  
\end{acknowledgment}

\section{Uniformly Positive Curvature}

\numberwithin{equation}{algorithm}

First we work on the proof of Theorem \ref{pos}. Set 
\begin{eqnarray*}
s &\equiv &\frac{1}{n-1}trace\left( S\right) , \\
S_{0} &\equiv &S-\frac{trace\left( S\right) }{n-1}\cdot id,\text{ } \\
r &\equiv &\frac{1}{n-1}\left( Ric\left( \dot{\gamma},\dot{\gamma}\right)
+\left\vert S_{0}\right\vert ^{2}\right) ,\text{ and}
\end{eqnarray*}%
abusing notation, let%
\begin{equation*}
R\left( \cdot \right) =R\left( \cdot ,\dot{\gamma}\right) \dot{\gamma}.
\end{equation*}%
Recall (\cite{gw}, page 36) that the Jacobi equation is equivalent to the
two first order equations 
\begin{eqnarray}
S\left( J\right) &=&J^{\prime }\text{ and}  \notag \\
S^{2}+S^{\prime }+R &=&0,  \label{Ricatti}
\end{eqnarray}%
and that Equation \ref{Ricatti} implies%
\begin{equation*}
s^{2}+s^{\prime }+r=0.
\end{equation*}

\begin{proposition}
\label{proppos}Assume the hypotheses of Theorem \ref{pos}. If $s(t)\equiv
\cot (t),$ then $S(t)\equiv \cot (t)\cdot id$ and therefore $\sec (\gamma
^{\prime }(t),\cdot )\equiv 1$.
\end{proposition}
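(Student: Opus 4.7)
The plan is a direct computation that exploits the scalar Riccati equation $s^{2}+s'+r=0$ together with the pinching $Ric \geq n-1$ to conclude that the trace-free part $S_{0}$ must vanish.

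First, I would differentiate the hypothesis $s(t) \equiv \cot(t)$ to get $s'(t) = -\csc^{2}(t) = -1 - \cot^{2}(t) = -1 - s(t)^{2}$. Substituting into the scalar Riccati equation $s^{2}+s'+r=0$ yields $r(t) \equiv 1$. Unpacking the definition of $r$, this means
\begin{equation*}
Ric(\dot\gamma, \dot\gamma) + |S_{0}|^{2} = n-1.
\end{equation*}
Since by hypothesis $Ric(\dot\gamma, \dot\gamma) \geq n-1$, it follows that $|S_{0}|^{2} \leq 0$, so $S_{0} \equiv 0$. Therefore $S = s \cdot id = \cot(t) \cdot id$, which also forces $Ric(\dot\gamma, \dot\gamma) \equiv n-1$.

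Next, to recover the sectional curvatures, I would plug $S = \cot(t) \cdot id$ back into the full Riccati equation $S^{2} + S' + R = 0$. Computing $S^{2} = \cot^{2}(t) \cdot id$ and $S' = -\csc^{2}(t) \cdot id = -(1+\cot^{2}(t)) \cdot id$, one gets $R = -S^{2} - S' = id$ on $\dot\gamma^{\perp}$. Since $R(\cdot) = R(\cdot, \dot\gamma)\dot\gamma$, this says $\sec(\dot\gamma, v) = 1$ for every unit $v \perp \dot\gamma$, as required.

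There is no real obstacle here: the proposition is essentially a rigidity statement extracted from the equality case of the scalar Riccati comparison. The only thing to be careful about is that the scalar equation $s^{2}+s'+r=0$ is genuinely implied by the tensor equation (which is recorded in the setup just before the proposition), and that $|S_{0}|^{2}$ appears with the correct sign in the definition of $r$ so that the Ricci lower bound pins it to zero.
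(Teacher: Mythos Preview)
Your proof is correct and matches the paper's argument essentially line for line: derive $r\equiv 1$ from the scalar Riccati equation, use $Ric\geq n-1$ to force $|S_{0}|^{2}=0$, and then substitute $S=\cot(t)\cdot id$ back into the full Riccati equation to read off $R=id$.
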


\begin{proof}
Substituting $s(t)\equiv \cot (t)$ into $s^{2}+s^{\prime }+r=0$ gives 
\begin{align*}
0& =\cot ^{2}(t)-\csc ^{2}(t)+r \\
& =-1+r.
\end{align*}%
So $r\equiv 1$. Consequently 
\begin{align*}
1& =r \\
& =\frac{Ric\left( \dot{\gamma},\dot{\gamma}\right) +|S_{0}|^{2}}{n-1} \\
& \geq \frac{(n-1)+|S_{0}|^{2}}{n-1} \\
& =1+\frac{|S_{0}|^{2}}{n-1}.
\end{align*}

Thus $|S_{0}|\equiv 0$, and 
\begin{eqnarray*}
S &=&\frac{trace\left( S\right) }{n-1}\cdot id \\
&=&s\cdot id \\
&=&\cot (t)\cdot id.
\end{eqnarray*}%
Substituting $S=\cot (t)\cdot id$ into the Riccati equation, $%
S^{2}+S^{\prime }+R=0,$ gives%
\begin{eqnarray*}
\left( \cot ^{2}(t)-\csc ^{2}(t)\right) \cdot id+R &=&0, \\
-id+R &=&0,
\end{eqnarray*}%
and therefore $\sec (\gamma ^{\prime },\cdot )\equiv 1$.
\end{proof}

\begin{remark}
\label{modelpos}The solution to the initial value problem 
\begin{equation*}
f^{2}+f^{\prime }+1=0;\qquad f(t_{0})=s(t_{0}),\text{ }t_{0}\in \left[ 0,\pi %
\right]
\end{equation*}%
is 
\begin{equation*}
f(t)=\cot \left( t-\left( t_{0}-\cot ^{-1}s\left( t_{0}\right) \right)
\right) .
\end{equation*}%
If $s(t_{0})<\cot (t_{0}),$ it follows that $f$ has an asymptote at a time $%
t=d\in (t_{0},\pi ),$ and $\lim_{t\rightarrow d^{-}}f\left( t\right)
=-\infty .$ If $s(t_{0})>\cot (t_{0})$, it follows that $f$ has an asymptote
at a time $t=d\in (0,t_{0}),$ and $\lim_{t\rightarrow d+}f\left( t\right)
=\infty .$
\end{remark}

\begin{proposition}
\label{xandypos} Assume the hypotheses of Theorem \ref{pos}. Let $f$ be as
in Remark \ref{modelpos} for some $t_{0}\in \left[ \alpha ,\pi \right] .$
Then

\begin{itemize}
\item[(a)] $s|_{[\alpha ,t_{0})}\geq f|_{[\alpha ,t_{0})}$ and

\item[(b)] $s|_{(t_{0},\pi )}\leq f|_{(t_{0},\pi )}$.
\end{itemize}
\end{proposition}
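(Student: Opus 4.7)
The plan is to apply a standard Riccati comparison argument to the scalar functions $s$ and $f$. Subtracting the model equation $f^{2}+f^{\prime }+1=0$ from the trace equation $s^{2}+s^{\prime }+r=0$ (both derived earlier in the section) and setting $g:=s-f$, I get
\begin{equation*}
g^{\prime }+(s+f)g=-(r-1),\qquad g(t_{0})=0.
\end{equation*}
The hypothesis $\mathrm{Ric}\geq n-1$ combined with the identity $r=\frac{1}{n-1}\left( \mathrm{Ric}(\dot{\gamma},\dot{\gamma})+|S_{0}|^{2}\right) $ gives $r\geq 1$, so the right-hand side is non-positive throughout.

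Next, I would introduce the integrating factor $\mu (t)=\exp \!\left( \int_{t_{0}}^{t}(s+f)\,d\tau \right) >0$ on any subinterval where $s+f$ is integrable, so that
\begin{equation*}
(\mu g)^{\prime }=-\mu (r-1)\leq 0.
\end{equation*}
Since $\mu g$ vanishes at $t_{0}$ and is nonincreasing, this forces $\mu g\leq 0$ for $t>t_{0}$ and $\mu g\geq 0$ for $t<t_{0}$. Dividing by the positive factor $\mu $ yields both conclusions: $s\leq f$ on $(t_{0},\pi )$ and $s\geq f$ on $[\alpha ,t_{0})$.

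The main obstacle is that either $s$ or $f$ may blow up inside the interval of interest (as noted in Remark \ref{modelpos} for $f$, and as is possible in principle for $s$), which makes a global integrating factor suspicious. To sidestep this I would prefer to give a direct pointwise argument: suppose, say, for a contradiction that $g(t_{1})>0$ for some $t_{1}\in (t_{0},\pi )$ at which $f$ is still finite. By continuity there is a last $t_{\ast }\in \lbrack t_{0},t_{1})$ with $g(t_{\ast })=0$ and $g>0$ on $(t_{\ast },t_{1}]$. The ODE evaluated at $t_{\ast }$ gives $g^{\prime }(t_{\ast })=-(r(t_{\ast })-1)\leq 0$, contradicting the existence of such a right neighborhood. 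The backward case is symmetric: if $g(t_{1})<0$ for some $t_{1}\in \lbrack \alpha ,t_{0})$ where $f$ is finite, take the first (leftmost) $t_{\ast }\in (t_{1},t_{0}]$ with $g(t_{\ast })=0$ and derive the same contradiction. This pointwise version automatically respects the singularities, because the argument only concerns intervals on which both $s$ and $f$ remain finite, and one pushes the comparison as far toward the endpoints as the functions allow.
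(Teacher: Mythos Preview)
Your integrating-factor argument is correct and is exactly the paper's proof in different notation: the paper writes the solution of the linear ODE for $y=f-s=-g$ as $y=\tfrac{1}{2}ux^{2}$ with $x'=-\tfrac{1}{2}(f+s)x$ and $u'=2(r-1)/x^{2}$, and one checks that $x^{2}$ is a constant multiple of $1/\mu$ while $u$ is, up to a constant, $\int_{t_{0}}^{t}\mu\,(r-1)\,d\tau$. Your worry about blow-up is also unnecessary: the hypotheses of Theorem~\ref{pos} include that $L(t)$ spans $\dot\gamma(t)^{\perp}$ on $(\alpha,\pi)$, so $S$ and hence $s$ are smooth on all of $(\alpha,\pi)$; the only possible singularity is the asymptote of $f$, and the integrating factor is perfectly well defined on each component of the common domain containing $t_{0}$.

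By contrast, your ``preferred'' pointwise alternative has a genuine gap. From $g(t_{\ast})=0$ you obtain $g'(t_{\ast})=-(r(t_{\ast})-1)\leq 0$, but the hypothesis only gives $r\geq 1$, not $r>1$, so $g'(t_{\ast})=0$ is entirely possible. In that case there is no contradiction with $g>0$ immediately to the right of $t_{\ast}$: first-order information is silent, and you have no control over $g''(t_{\ast})=-r'(t_{\ast})$. The integrating factor (equivalently the paper's $y=\tfrac{1}{2}ux^{2}$) is precisely the device that converts the weak inequality $r\geq 1$ into the global monotonicity of $\mu g$; the bare pointwise argument cannot do this. Keep your first argument and drop the second.
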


\begin{proof}
Set $y=f-s$ on $(\alpha ,t_{0})$. Using $f^{2}+f^{\prime }+1=0$ and $%
s^{2}+s^{\prime }+r=0$ we find 
\begin{align*}
y^{\prime }& =f^{\prime }-s^{\prime } \\
& =-f^{2}+s^{2}+r-1 \\
& =-(f-s)(f+s)+r-1.
\end{align*}%
So $y$ solves the initial value problem 
\begin{equation}
y^{\prime }=-(f+s)y+r-1,\qquad y(t_{0})=0.  \label{y IV}
\end{equation}

Let $x$ be a nontrivial solution to $x^{\prime }=-\frac{1}{2}(f+s)x$ and $u$
a function that satisfies $u^{\prime }=\frac{2\left( r-1\right) }{x^{2}}$
and $u(t_{0})=0$. We claim that this implies $y=\frac{1}{2}ux^{2}.$ To
justify this claim, we'll show $\frac{1}{2}ux^{2}$ solves the initial value
problem \ref{y IV}. Indeed, 
\begin{eqnarray}
\left( \frac{1}{2}ux^{2}\right) ^{\prime } &=&\frac{1}{2}\left( u^{\prime
}x^{2}+2uxx^{\prime }\right)  \notag \\
&=&\frac{1}{2}\left( \frac{2\left( r-1\right) }{x^{2}}x^{2}-ux(f+s)x\right) 
\notag \\
&=&\left( r-1\right) -\frac{1}{2}ux^{2}(f+s).  \notag
\end{eqnarray}%
Since $u\left( t_{0}\right) =0,$ our claim holds and $y=\frac{1}{2}ux^{2}.$

Since $r\geq 1,$ $u^{\prime }=\frac{2\left( r-1\right) }{x^{2}}\geq 0.$
Combined with $u(t_{0})=0,$ we see that $u\leq 0$ on $\left[ \alpha
,t_{0}\right) .$ Hence $y|_{\left[ \alpha ,t_{0}\right) }=\frac{1}{2}%
ux^{2}|_{\left[ \alpha ,t_{0}\right) }\leq 0$, and $s|_{\left[ \alpha
,t_{0}\right) }\geq f|_{\left[ \alpha ,t_{0}\right) },$ proving Part $a.$

Similarly, $u^{\prime }\geq 0$ and $u(t_{0})=0$ yield $u|_{(t_{0},\pi )}\geq
0.$ Hence $s|_{(t_{0},\pi )}\leq f|_{(t_{0},\pi )}$ as claimed.
\end{proof}

We are now ready to prove Theorem \ref{pos}.

\begin{proof}[Proof of Theorem \protect\ref{pos}]
Assume, for contradiction, that $S\not\equiv \cot (t)\cdot id$. It follows
from Proposition \ref{proppos} that $s(t)\not\equiv \cot (t)$.

Assume for the moment that $s(t_{0})<\cot (t_{0})$ for some $t_{0}\in \left[
\alpha ,\pi \right] .$ Then by Remark \ref{modelpos}, $f$ has an asymptote
at a time $d\in (t_{0},\pi ),$ and $\lim_{t\rightarrow d^{-}}f\left(
t\right) =-\infty .$ On the other hand, by Proposition \ref{xandypos}(b), $%
s|_{(t_{0},\pi )}\leq f|_{(t_{0},\pi )}$. So $s$ is not defined for all $%
t\in (\alpha ,\pi )$. This contradicts our hypothesis that $L\equiv \left\{
J(t)|J\in \Lambda \right\} $ spans $\dot{\gamma}(t)^{\perp }$ for $t\in
(\alpha ,\pi ).$

We may assume therefore $s(t)\geq \cot (t)$ on $[\alpha ,\pi )$. Since we
assumed 
\begin{equation*}
\max \{\text{eigenvalue }S\left( \alpha \right) \}\leq \cot \alpha ,
\end{equation*}%
it follows that $s(\alpha )=\cot (\alpha ).$ So either, $s\equiv \cot $ or
for some $t_{0}\in \left( \alpha ,\pi \right] $, $s(t_{0})>\cot (t_{0})$.
Assume the latter. Then by Remark \ref{modelpos}, the graph of $f$ has an
asymptote on $(0,t_{0})$. If the asymptote for $f$ lies in $(0,\alpha ),$
then the graph of $f$ is a shift to the right of the graph of $\cot $ by an
amount in $\left( 0,\alpha \right) .$ Combining this with Proposition \ref%
{xandypos}(a) gives 
\begin{equation*}
s(\alpha )\geq f(\alpha )>\cot (\alpha )=s(\alpha ),
\end{equation*}%
a contradiction.

The remaining case is when $s(t_{0})>\cot (t_{0})$ and the asymptote for $f$
is in $(\alpha ,t_{0})$. By Proposition \ref{xandypos}(a), 
\begin{equation*}
s|_{[\alpha ,t_{0})}\geq f|_{[\alpha ,t_{0})},
\end{equation*}%
and by Remark \ref{modelpos}, 
\begin{equation*}
\lim_{t\rightarrow d+}f\left( t\right) =\infty .
\end{equation*}%
Together the previous two displays imply that $s$ is not defined at $d,$
contrary to our hypothesis that $L\equiv \left\{ J(t)|J\in \Lambda \right\} $
spans $\dot{\gamma}(t)^{\perp }$ for $t\in (\alpha ,\pi ).$ Hence $s\equiv
\cot $, and by Proposition \ref{proppos}, $S\equiv \cot (t)\cdot id$ and $%
\sec (\gamma ^{\prime }(t),\cdot )\equiv 1$ as desired.
\end{proof}

We complete the proof of Theorem \ref{main} by mostly following the lines of
Wilking's proof of Theorem \ref{Wilking split}. In particular, we use
Wilking's generalization of the Horizontal Curvature Equation from \cite%
{wilking}, which we review in outline below. For details see \cite{gw}.

Let $\gamma $ be a unit speed geodesic in a complete Riemannian $n$%
--manifold $M.$ Let $\Lambda $ be an $(n-1)$-dimensional space of Jacobi
fields orthogonal to $\gamma $ on which the Riccati operator $S$ is
self-adjoint. Let $\Psi $ be any vector subspace of $\Lambda .$ Define 
\begin{equation*}
V(t)\equiv \{J(t)\ |\ J\in \Psi \}\oplus \{J^{\prime }(t)\ |\ J\in \Psi
,J(t)=0\}.
\end{equation*}%
Note that the second summand vanishes for almost every $t,$ and $V(t)$
defines a smooth distribution along $\gamma $ (\cite{gw}, Lemma 1.7.1, \cite%
{wilking}, page 1300). Set 
\begin{equation*}
H(t)\equiv V(t)^{\perp }\cap \dot{\gamma}(t)^{\perp }.
\end{equation*}%
At a time $t_{0}$ that satisfies $V(t_{0})=\{J(t_{0})\ |\ J\in \Psi \},$ we
define a Riccati operator 
\begin{eqnarray*}
\hat{S} &:&H\left( t_{0}\right) \longrightarrow H\left( t_{0}\right) \text{
by} \\
\hat{S}\left( y\right) &=&\left( \left( J^{h}\right) ^{\prime }\right)
^{h}|_{t_{0}},\text{ where }J\in \Lambda \text{ satisfies }J\left(
t_{0}\right) =y,
\end{eqnarray*}%
and the superscript $^{h}$ denotes the component in $H.$ We also define a
map 
\begin{eqnarray*}
A &:&V\left( t_{0}\right) \longrightarrow H\left( t_{0}\right) \text{ by} \\
A\left( u\right) &=&\left( J^{\prime }\right) ^{h}\left( t_{0}\right) ,\text{
where }J\in \Psi ,\text{ }J\left( t_{0}\right) =u.
\end{eqnarray*}

\begin{theorem}
(Wilking) \label{HCE thm}$\hat{S}$ and $A$ are well-defined maps for all $%
t\in \mathbb{R},$ $\hat{S}$ is self-adjoint, and $\hat{S}$ and $A$ satisfy
the Riccati type equation 
\begin{equation}
\hat{S}^{\prime }+\hat{S}^{2}+\left\{ R\left( \cdot ,\dot{\gamma}(t)\right) 
\dot{\gamma}(t)\right\} ^{h}+3AA^{\ast }=0.  \label{Horiz Wilki}
\end{equation}
\end{theorem}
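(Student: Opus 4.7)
The plan is to follow Wilking's original argument from \cite{wilking}, with the detailed computations carried out in \cite{gw}. The proof splits naturally into three parts: well-definedness of $A$ and $\hat{S}$, self-adjointness of $\hat{S}$, and derivation of the Riccati-type equation. Well-definedness of $A$ is built into the definition of $V$: if $J_{1},J_{2}\in \Psi$ both satisfy $J_{i}(t_{0})=u$, then $K=J_{1}-J_{2}\in \Psi$ has $K(t_{0})=0$, so $K^{\prime}(t_{0})$ lies in the second summand of $V(t_{0})$, forcing $(K^{\prime})^{h}(t_{0})=0$. Smoothness of $V(t)$ in $t$ (\cite{gw}, Lemma 1.7.1) then promotes $A$ to a smooth bundle map along $\gamma$.

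For $\hat{S}$, the key tool is that self-adjointness of $S$ on $\Lambda$ is equivalent to the Wronskian identity $\langle J_{1}^{\prime},J_{2}\rangle =\langle J_{1},J_{2}^{\prime}\rangle$ for all $J_{1},J_{2}\in \Lambda$. Given $J_{1},J_{2}\in \Lambda$ with $J_{1}(t_{0})=J_{2}(t_{0})=y\in H(t_{0})$, setting $K=J_{1}-J_{2}$ yields $\langle K^{\prime}(t_{0}),\tilde{J}(t_{0})\rangle =\langle K(t_{0}),\tilde{J}^{\prime}(t_{0})\rangle =0$ for every $\tilde{J}\in \Lambda$, so $K^{\prime}(t_{0})\perp L(t_{0})$. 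At times where $L(t_{0})=\dot{\gamma}(t_{0})^{\perp}$ this forces $K^{\prime}(t_{0})=0$; the remaining times are handled by continuity. Self-adjointness of $\hat{S}$ then reduces to the identity
\begin{equation*}
\langle \hat{S}y_{1},y_{2}\rangle -\langle y_{1},\hat{S}y_{2}\rangle =\langle \pi_{H}^{\prime}y_{1},y_{2}\rangle -\langle y_{1},\pi_{H}^{\prime}y_{2}\rangle ,
\end{equation*}
obtained by expanding both sides and using the Wronskian to cancel the $\langle J_{i}^{\prime}(t_{0}),y_{j}\rangle$ cross terms; this vanishes because the orthogonal projection $\pi_{H}$ onto $H$ is self-adjoint, and hence so is $\pi_{H}^{\prime}$.

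For the Riccati-type equation \ref{Horiz Wilki}, differentiate $\hat{S}$ along $\gamma$ using the Jacobi equation $J^{\prime\prime}=-R(\cdot ,\dot{\gamma})\dot{\gamma}$. After expanding $((J^{h})^{\prime})^{h}$ one more time and collecting contributions, three ingredients organize the bookkeeping: (i) $\pi_{V}^{\prime}|_{H}$ recovers the $A$-tensor, whose adjoint $A^{\ast}$ maps $V$ into $H$; (ii) $\pi_{H}^{2}=\pi_{H}$ and its derivatives yield relations such as $\pi_{H}\pi_{H}^{\prime}\pi_{H}=0$; (iii) for $J\in \Lambda$ with $J^{h}(t_{0})=y$, the $V$-component of $J^{\prime}(t_{0})$ is precisely $A^{\ast}y$. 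The main obstacle is the careful accounting that produces the coefficient $3$ in $3AA^{\ast}$. This is the O'Neill-type contribution, exactly analogous to the factor of $3$ in O'Neill's horizontal curvature formula for Riemannian submersions: one $AA^{\ast}$ term comes from differentiating the $V$-component of $J^{\prime}$, while an additional $2AA^{\ast}$ arises from the cross-derivative involving $\pi_{H}^{\prime}\pi_{H}^{\prime}$. Once these identities are assembled, equation~\ref{Horiz Wilki} falls out; full details are in \cite{wilking} and Section~1.6 of \cite{gw}.
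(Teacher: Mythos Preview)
The paper does not supply its own proof of Theorem~\ref{HCE thm}; it merely states the result as Wilking's and refers the reader to \cite{wilking} and \cite{gw} for the details. Your sketch follows precisely those sources, so there is nothing in the paper to compare against beyond noting that your outline is consistent with the literature the authors cite.
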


\begin{remark}
To see how this generalizes the Horizontal Curvature Equation of \cite{Gray}
and \cite{O'Neill}, suppose $\gamma $ is a horizontal geodesic for a
Riemannian submersion $\pi :M\longrightarrow B.$ For $\Lambda $ take the
Jacobi fields that correspond to variations of horizontal geodesics that
leave $\pi ^{-1}\left( \pi \left( \gamma \left( 0\right) \right) \right) $
orthogonally. For $\Psi $ take the Holonomy fields, that is those that come
from the lifts of $\gamma .$ Then via the Horizontal Curvature Equation,
Equation \ref{Horiz Wilki} becomes the usual Riccati Equation along $\pi
\left( \gamma \right) .$
\end{remark}

\begin{proof}[Proof of Theorem \protect\ref{main}]
Let $M$ be an $n$-dimensional Riemannian manifold with $\sec \geq 1$. For $%
\alpha \in \lbrack 0,\pi ),$ let $\gamma :\left[ \alpha ,\pi \right]
\longrightarrow M$ be a unit speed geodesic. Let $\Lambda $ be an $(n-1)$%
-dimensional family of Jacobi fields on which the Riccati operator $S$ is
self-adjoint and satisfies 
\begin{equation*}
\max \{\text{eigenvalue }S(\alpha )\}\leq \cot \alpha .
\end{equation*}

Set 
\begin{equation*}
\Psi \equiv \{J\in \Lambda \ |\ J(t)=0\text{ for some }t\},
\end{equation*}%
and define $V\left( t\right) ,$ $H\left( t\right) ,$ the Riccati operator $%
\hat{S}:H\left( t\right) \longrightarrow H\left( t\right) $ and the map 
\newline
$A:V\left( t\right) \longrightarrow H\left( t\right) $ as above.

By Theorem \ref{Horiz Wilki}, $\hat{S}$ and $A$ satisfy the Riccati-type
equation 
\begin{equation}
\hat{S}^{\prime }+\hat{S}^{2}+\left\{ R\left( \cdot ,\dot{\gamma}(t)\right) 
\dot{\gamma}(t)\right\} ^{h}+3AA^{\ast }=0.  \label{hoirz wilking}
\end{equation}

Next we show that 
\begin{equation*}
\max \{\text{eigenvalue }\hat{S}(\alpha )\}\leq \cot \alpha .
\end{equation*}

For $z\in H(\alpha ),$ let $J\in \Lambda $ satisfy $J\left( \alpha \right)
=z.$ Then%
\begin{eqnarray}
g\left( \hat{S}(\alpha )z,z\right) &=&g\left( \left( \left( J^{h}\right)
^{\prime }\right) ^{h},z\right)  \notag \\
&=&g\left( \left( J^{h}\right) ^{\prime },z\right) .  \label{hat(S)}
\end{eqnarray}%
If $J^{v}\left( t\right) $ denotes the component of $J\left( t\right) $ in $%
V\left( t\right) ,$ then for $X$ tangent to $H,$ 
\begin{equation*}
g\left( \left( J^{v}\right) ^{\prime },X\right) |_{\alpha }=-g\left(
J^{v},X^{\prime }\right) |_{\alpha }=0,
\end{equation*}%
since $J^{v}\left( \alpha \right) =0.$ In particular, $\left( \left(
J^{v}\right) ^{\prime }\right) ^{h}|_{\alpha }=0$. Combined with Equation %
\ref{hat(S)} this gives%
\begin{eqnarray*}
g\left( \hat{S}(\alpha )z,z\right) &=&g\left( J^{\prime },z\right) \\
&=&g\left( S(\alpha )z,z\right) \\
&\leq &g\left( \cot (\alpha )z,z\right) .
\end{eqnarray*}%
Hence $\max \{\text{eigenvalue }\hat{S}(\alpha )\}\leq \cot \alpha ,$ as
claimed.

Now set 
\begin{equation*}
\hat{R}\equiv \left\{ R\left( \cdot ,\dot{\gamma}(t)\right) \dot{\gamma}%
(t)\right\} ^{h}+3AA^{\ast },
\end{equation*}%
and for $x\in H\left( t\right) ,$ 
\begin{equation*}
\widehat{\text{\textrm{sec}}}\left( x\right) \equiv g\left( \hat{R}\left( 
\frac{x}{\left\vert x\right\vert }\right) ,\frac{x}{\left\vert x\right\vert }%
\right) .
\end{equation*}%
Since $AA^{\ast }$ is a nonnegative operator and $\sec \left( M\right) \geq
1,$ $\widehat{\text{\textrm{sec}}}\geq 1.$ Substituting into Equation \ref%
{hoirz wilking} we have 
\begin{equation*}
\hat{S}^{\prime }+\hat{S}^{2}+\hat{R}=0.
\end{equation*}

Since $\widehat{\text{\textrm{sec}}}\geq 1$ and $\max \{$eigenvalue $\hat{S}%
(\alpha )\}\leq \cot \alpha ,$ we apply the proof of Theorem \ref{pos} and
conclude that $\hat{S}\equiv \cot (t)\cdot id$ and consequently $\widehat{%
\text{\textrm{sec}}}\equiv 1.$ Substituting back into $\hat{S}^{\prime }+%
\hat{S}^{2}+\hat{R}=0,$ we find

\begin{equation*}
-\csc ^{2}(t)\cdot id+\cot ^{2}\cdot id+\left\{ R\left( \cdot ,\dot{\gamma}%
(t)\right) \dot{\gamma}(t)\right\} ^{h}+3AA^{\ast }=0
\end{equation*}%
or%
\begin{equation*}
\left\{ R\left( \cdot ,\dot{\gamma}(t)\right) \dot{\gamma}(t)\right\}
^{h}+3AA^{\ast }=id.
\end{equation*}%
Applying both sides to an $x\in H\left( t\right) $ and then taking the inner
product with $x$ gives 
\begin{eqnarray*}
g\left( x,x\right) &=&g\left( R\left( x,\dot{\gamma}(t)\right) \dot{\gamma}%
(t),x\right) +g\left( 3AA^{\ast }\left( x\right) ,x\right) \\
&\geq &g\left( x,x\right) +g\left( 3AA^{\ast }\left( x\right) ,x\right) .
\end{eqnarray*}%
Thus%
\begin{equation*}
0\geq g\left( 3AA^{\ast }\left( x\right) ,x\right) .
\end{equation*}%
On the other hand, $g\left( AA^{\ast }\left( x\right) ,x\right) =g\left(
A^{\ast }\left( x\right) ,A^{\ast }\left( x\right) \right) \geq 0;$ so $%
AA^{\ast }x\equiv 0,$ and $A\equiv 0.$ It follows that $H\left( t\right) $
is parallel, and $\mathrm{sec}\left( x,\dot{\gamma}\right) \equiv 1$ for all 
$x\in H\left( t\right) .$ So $H\left( t\right) $ is spanned by Jacobi fields
of the form $\sin (t)E(t)$ where $E$ is a parallel field tangent to $H.$
\end{proof}

\subsection*{Jacobi Fields That Violate Our Hypotheses}

The following example shows that the hypothesis in Theorem \ref{main} about
the Riccati operator being self-adjoint is necessary.

\begin{example}
\label{self adj ex}Consider $S^{3}$ with the round metric. Define 
\begin{equation*}
J_{1}(t)\equiv \sin (t)\cdot E_{1}(t)+\cos (t)\cdot E_{2}(t)
\end{equation*}%
and 
\begin{equation*}
J_{2}(t)\equiv \cos (t)\cdot E_{1}(t)-\sin (t)\cdot E_{2}(t)
\end{equation*}%
where $E_{1}(t)$ and $E_{2}(t)$ are orthonormal parallel fields. Set $%
\Lambda \equiv $\textrm{$span$}$\left\{ J_{1},J_{2}\right\} .$ Since $%
\left\{ J_{1}\left( t\right) ,J_{2}\left( t\right) \right\} $ is orthonormal
for all $t,$ no Jacobi field $J\in \Lambda $ has a zero, so the conclusion
of Theorem \ref{main} does not hold for this family. On the other hand,
neither do the hypotheses, since $S|_{\Lambda }$ is not self-adjoint.
Indeed, 
\begin{align*}
g\left( SJ_{1},J_{2}\right) & =g\left( \cos (t)\cdot E_{1}(t)-\sin (t)\cdot
E_{2}(t),\cos (t)\cdot E_{1}(t)-\sin (t)\cdot E_{2}(t)\right) \\
& =\cos ^{2}(t)+\sin ^{2}(t) \\
& =1
\end{align*}%
and 
\begin{align*}
g\left( J_{1},SJ_{2}\right) & =g\left( \sin (t)\cdot E_{1}(t)+\cos (t)\cdot
E_{2}(t),-\sin (t)\cdot E_{1}(t)-\cos (t)\cdot E_{2}(t)\right) \\
& =-\sin ^{2}(t)-\cos ^{2}(t) \\
& =-1.
\end{align*}
\end{example}

The requirement that the maximum of the eigenvalues be bounded above is also
necessary, and our hypothesis is the optimal one. This is demonstrated by
the following example.

\begin{example}
\label{boundary inequality is needed}Let $\gamma :\left( -\infty ,\infty
\right) \longrightarrow S^{n}\left( 1\right) $ be a geodesic in the unit
sphere. Let $\left\{ E_{i}\right\} _{i=1}^{n-1}$ be a basis of parallel
fields along $\gamma $ that are all normal to $\gamma .$ For $\varepsilon
\in \left( 0,\frac{\pi }{6}\right) ,$ set 
\begin{equation*}
J_{i}=\sin (t-\varepsilon )\cdot E_{i}(t),
\end{equation*}%
and define $\Lambda \equiv $\textrm{$span$}$\left\{ J_{i}\right\} .$ Since $%
J_{i}\left( \varepsilon \right) =0$ for all $i,$ $S|_{\Lambda }$ is
self-adjoint. Moreover, $S\left( t\right) =\cot \left( t-\varepsilon \right)
id.$ So for $\alpha =\frac{\pi }{2}$ we have%
\begin{equation*}
\max \{\text{eigenvalue }S\left( \frac{\pi }{2}\right) \}=\cot \left( \frac{%
\pi }{2}-\varepsilon \right) >0=\cot \left( \frac{\pi }{2}\right) .
\end{equation*}%
That is our family, $\Lambda $, does not satisfy our boundary hypothesis, $%
\max \{$eigenvalue $S(\alpha )\}\leq \cot \alpha .$ On the other hand, each $%
J\in \Lambda \setminus \left\{ 0\right\} $ is nonvanishing on $\left[ \alpha
,\pi \right] ,$ so the conclusion does not hold either.
\end{example}

\section{Positive Ricci$_{k}$}

\begin{proof}[Proof of Theorem \protect\ref{Ric_k splitting}]
Let 
\begin{equation*}
\mathcal{Z}\equiv \{J\in \Lambda \ |\ J(t)=0\text{ for some }t\}.
\end{equation*}

Suppose 
\begin{equation*}
dim\mathcal{Z}\leq n-1-k.
\end{equation*}%
Then%
\begin{equation*}
dim\mathcal{Z}^{\perp }\geq \left( n-1\right) -\left( n-1-k\right) =k
\end{equation*}%
Apply Wilking's generalization of the horizontal curvature equation, Theorem %
\ref{HCE thm}, and the proof of Theorem \ref{inf splitting thm} to conclude
that $\Lambda $ splits orthogonally into 
\begin{equation*}
\Lambda =\mathcal{Z}\oplus \mathcal{P}
\end{equation*}%
where $\mathcal{P}$ is a family of parallel Jacobi fields and has dimension $%
\geq k.$

In particular $Ric_{k}\left( \dot{\gamma}\right) $ is not positive, so if $%
Ric_{k}\left( \dot{\gamma}\right) $ is known to be positive, then it must be
that 
\begin{equation*}
\mathrm{dim}\mathcal{Z}\geq n-k.
\end{equation*}
\end{proof}

A nearly identical argument gives the proof of Theorem \ref{Ric_k > k thm},
except the role of the proof of Theorem \ref{inf splitting thm} is played by
the proof of Theorem \ref{pos}. To apply Theorem \ref{pos} we need to
establish the hypothesis on the upper bound on the initial values of the
Riccati operator. This is achieved in precisely the same manner as in the
proof of Theorem $\ref{main}.$

\begin{proof}[Proof of Theorem \protect\ref{Ricci_k Submersion}]
Let $\gamma :\left( -\infty ,\infty \right) \longrightarrow E$ be a
horizontal geodesic. Let $\Lambda $ be the space of Jacobi fields along $%
\gamma $ that arise from variations that leave $\pi ^{-1}\left( \pi \left(
\gamma \left( 0\right) \right) \right) $ orthogonally. Let 
\begin{equation*}
\mathcal{Z}\equiv \{J\in \Lambda \ |\ J(t)=0\text{ for some }t\}.
\end{equation*}%
By Theorem \ref{Ric_k splitting}, $dim\mathcal{Z}\geq n-k.$

On the other hand, according to Wilking (top of page $1305$ of \cite{wilking}%
), $\mathcal{Z}$ is everywhere tangent to the dual leaf through $\gamma
\left( 0\right) .$ Since $\mathcal{Z}$ is everywhere orthogonal to $\gamma $
and $\gamma $ is everywhere tangent to the dual leaf, the dimension of the
dual leaf is $\geq n-k+1$.
\end{proof}

\end{document}